\title{Product formulas for conjugated motivic Milnor basis}
\author{Hana Jia Kong}
\address{School of Mathematical Sciences, Zhejiang University, Hangzhou, China}
\email{hana.jia.kong@gmail.com}
\author{Weinan Lin}
\address{Shanghai Center for Mathematical Sciences, Fudan University, Shanghai, China}
\email{linweinan@fudan.edu.cn}
\thanks{The authors would like to thank Bert Guillou, Dan Isaksen, J.D. Quigley, Joshua Peterson, Guozhen Wang, Zhouli Xu and an anonymous referee for their insightful discussions, feedback, and comments that improved this work. The first author was supported
by “the Fundamental Research Funds for the Central Universities” No. 226-2025-00092 (China). The second author is partially supported by grants NSFC-12501085 and NSFC-12526206.
}
\keywords{motivic stable homotopy theory, Steenrod algebra}
\subjclass[2020]{Primary 14F42, 55S10} 
\begin{document}
\maketitle

\begin{abstract}
We give explicit product formulas for the mod 2 motivic Steenrod algebra in the conjugated motivic Milnor basis. This provides an efficient tool for computing the $E_2$-page of the motivic Adams spectral sequence.
\end{abstract}

\newcommand{\Car}{\mathrm{Car}}
\section{Introduction}

In recent years, motivic methods have seen significant progress and applications in stable homotopy theory. The Steenrod algebra and the Adams spectral sequence have been extended to the motivic context \cite{Voev,DI, IWX}, which provides new tools for computing motivic stable homotopy groups. 

Despite this progress, a fully explicit product formula for the conjugated motivic Milnor basis of the motivic Steenrod algebra has not yet been achieved over a general base field. It turns out that the motivic computation of such a formula is much more complex than the classical case studied by \cite{Mil}, for several reasons.

\begin{enumerate}
\item The motivic cohomology of a point can be complicated, and contains extra gradings.
\item The motivic dual Steenrod algebra is in general a Hopf algebroid; therefore, there is an asymmetry in the action of the unit map.
\item The motivic dual Steenrod algebra is not a free polynomial algebra.
\end{enumerate}

Previous work by Kylling \cite{Kyl} gives some recursive formulas in specific cases. In this paper, we work out general product formulas for the conjugated motivic Milnor basis of the mod 2 motivic Steenrod algebra. This formula also provides the foundation for machine-assisted computations of the $\mathbb R$-motivic Adams spectral sequence, {extending the work of Lin \cite{LinGrobner}}.

\section{Background}
We work in the motivic homotopy category over a field $k$, with $\mathbb F_2$ coefficients throughout.\footnote{As mentioned in \cite{Kyl}, the results also work for an essentially smooth scheme over a field, or more generally a base scheme with no points of residue
characteristic two.}

We recall some results of the motivic dual Steenrod algebra; these results can be found in \cite{Voev, Voe11, Riou, HKO, Voe03}.

Let $\mathbb M_2:= H_{**}(\Spec k;\mathbb Z/2)$ denote the motivic mod $2$ cohomology of a point. 
The Milnor conjecture \cite{Voe03} shows that $$(\mathbb M_2)_{-n,-n+*}\simeq K^M_n(k)/2[\tau], ~~|\tau|=(0,-1).$$

Here $K^M_*(k)$ denote the Milnor $K$-theory, which by definition has $K^M_1(k)=k^\times/(k^\times)^2.$ 
In particular, there is an element called $\rho:=[-1]\in K^M_1(k)\subset (\mathbb M_2)_{-1,-1}$; the element $\rho$ is possibly zero if $-1$ is a square in $k$.

The motivic dual Steenrod algebra is 
\begin{equation}\label{eq:dualA}
    A_{**}=\mathbb M_{2}[\tau_i, \xi_j, i\geq 0, j\geq 1]/(\tau_i^2=\tau\xi_{i+1} + \rho \tau_0\xi_{i+1} + \rho\tau_{i+1}).
\end{equation}
with gradings given by
$$|\tau|=(0,-1), ~~|\rho|=(-1,-1),~~|\tau_i|=(2^{i+1}-1,2^i-1), \text{ and } |\xi_i|=(2^{i+1}-2,2^i-1).$$
We adopt the notation that $\xi_0=1$.

\vspace{3em}

\newcommand{\btau}{\bar\tau}
\newcommand{\bxi}{\bar\xi}

The dual motivic Steenrod algebra is a Hopf algebroid (see, e.g.,  \cite[A.1.1.1]{ravenel} for the definition of a Hopf algebroid). In particular, it has an antipode map $\chi.$

Let $\bar\tau_i=\chi \tau_i$, $\bxi_j=\chi \xi_j$ be the conjugates of $\tau_i$, $\xi_j$ respectively. Note that $\eta_R(\tau)=\tau+\rho\tau_0$. Then (\ref{eq:dualA}) can be rewritten as

$$A_{**}=\mathbb M_{2}[\btau_i, \bxi_j, i\geq 0, j\geq 1]/(\btau_i^2=\tau\bxi_{i+1}+\rho\btau_{i+1}).$$
Here each relation has one fewer term, which makes reductions of powers of $\btau_i$ possible.

The unit map and the coproduct map in $A_{**}$, in terms of $\btau_i$ and $\bxi_j$, have the following formulas. 
\begin{enumerate}
    \item The left and right unit maps: $\eta_L(\tau)=\tau$, $\eta_R(\tau)=\tau+\rho\btau_0$, $\eta_L(\rho)=\eta_R(\rho)=\rho. $
    \item The coproduct map $A_{**}\to A_{**}\otimes_{\eta_R,\mathbb M_2,\eta_L} A_{**}$:
    $$\begin{aligned}
    \psi(\btau_k)&=1\otimes \btau_k+\sum_{i=0}^k \btau_i\otimes \bxi_{k-i}^{2^i}, \\
    \psi(\bxi_k)&=\sum_{i=0}^k \bxi_i\otimes \bxi_{k-i}^{2^i}. 
    \end{aligned}$$
\end{enumerate}

We recall that this coproduct is defined so that when we consider the motivic Steenrod algebra $A^{**}\cong \Hom_{\mathbb M_2}(A_{**}, \mathbb M_2)$ where $\mathbb M_2$ acts on $A_{**}$ via $\eta_L$, the pairing satisfies
$$\langle \alpha\otimes\beta, a\otimes b \rangle=\langle \alpha, a\langle \beta, b \rangle \rangle$$
for any $\alpha,\beta\in A_{**}$, $a,b\in A^{*,*}$.

\newcommand*{\Seq}{\mathrm{Seq}}
\newcommand*{\SeqE}{\mathscr{E}}
\newcommand*{\SeqR}{\mathscr{R}}

\begin{notation}
\label{not:milnor_basis}
We introduce the following notation about sequences.
\begin{enumerate}
\item
Let $\Seq$ denote the set of all sequences of nonnegative integers of the form $(a_0, a_1,\dots)$ such that $a_i$ is non-zero for only finitely many $i$. 
\item Let $\SeqE\subset \Seq$ denote the subset of sequences $(e_0,e_1,\dots)$ where $e_i\in\{0,1\}$. 
\item Let $\SeqR\subset \Seq$ denote the subset of sequences $(0,r_1,r_2,\dots)$ where the first entry is zero.
\item Let $(n)_i\in \Seq$ denote the sequence whose $i$th entry is $n$ and all entries are $0$.
\item It is clear that
$$\{\btau(E)\bxi(R)=\btau_0^{e_0}\btau_1^{e_1}\cdots\cdot \bxi_1^{r_1}\bxi_2^{r_2}\cdots~|~ E=(e_i)\in \SeqE, R=(r_j)\in \SeqR\}$$
is an $\bM_2$-basis of $A_{**}$. Let $\bar Q(E)$, $\bar P(R)$ and $\bar M(E,R)$ denote the duals of $\btau(E)$, $\bxi(R)$ and $\btau(E)\bxi(R)$ respectively.

\end{enumerate}
\end{notation}

\begin{remark}\label{rem:basis_change}
The relationship between the conjugated Milnor basis elements $\bar Q(E), \bar P(R)$ and the standard Milnor basis elements $Q(E), P(R)$ are determined by the antipode map $\chi$ on $A_{**}$. For example, we can obtain
$$P(0,2,0,\dots) = \bar P(0,2,0,\dots) + \tau \bar Q(1,1,0,\dots)$$
by checking that both sides evaluate to the same value on $\bxi_1^2=\xi_1^2$, $\btau_0\btau_1=\tau_0\tau_1+\tau\xi_1^2+\rho\tau_1\xi_1+\rho\tau_0\xi_1^2$, $\btau_0\bxi_1=\tau_0\xi_1$ and $\btau_1=\tau_1+\tau_0\xi_1$.
\end{remark}

We refer to the basis $\bar M(E,R)\in A^{*,*}$, $E\in \SeqE, R\in \SeqR$ as the \emph{conjugated motivic Milnor basis}. The goal of this article is to determine the product formula for
$$\bar M(E_1,R_1)\cdot \tau^n \bar M(E_2,R_2)$$
where $E_1,E_2\in \SeqE$ and $R_1,R_2\in \SeqR$.
One of the difficulties for finding the product formula comes from the relations $\btau_i^2=\tau\bxi_{i+1} + \rho\btau_{i+1}$, which will be resolved in the next section.

\section{Reduction of $\btau(S)$}
For some $S\in \Seq$ where $s_i$ could be bigger than $1$, we want to reduce $\btau(S)=\btau_0^{s_0}\btau_1^{s_1}\cdots$ to a linear combination $\Sigma c_i\btau(E_i)\bxi(R_i)$ where each $c_i\in \mathbb M_2$, $E_i\in \SeqE$ and $R_i\in \SeqR$.
\begin{example}\label{ex:tau-red}
We have
\begin{align*}
\btau_0^2\btau_1 = \tau \btau_1\bxi_1 + \rho \btau_1^2
=\tau \btau_1\bxi_1 + \rho\tau\bxi_2 + \rho^2\btau_2,
\end{align*}
and 
\begin{align*}
\btau_0^4& = \tau^2\bxi_1^2
= \tau^2\bxi_1^2 + \rho^2\btau_1^2
= \tau^2\bxi_1^2 + \rho^2\tau\bxi_2 + \rho^3\btau_2.
\end{align*}
\end{example}

\begin{construction}
We construct a binary tree to encode the reduction of $\btau(S)$ for $S\in \Seq$ to a linear combination of $\btau(E)\bxi(R)$ for $E\in\SeqE$, $R\in\SeqR$.
\begin{enumerate}
\item 
Every node is labeled by two sequences $S'|R'$, representing $\btau(S')\bxi(R')$. Note that different nodes can share the same label.
\item The sum of the monomials represented by child nodes equals the monomial represented by the parent node.
\item The branches from a parent node to its child nodes correspond to rewriting a $\btau_i^2$ factor (where $i$ is as small as possible) into $\tau\bxi_{i+1}+\rho\btau_{i+1}$.
\end{enumerate}

We start from $\btau(S)$.
\begin{enumerate}
\item
The root node is labeled by $S|0$. 
\item 
A node labeled $S'|R'$ with $S'\in \Seq\backslash \SeqE $ has a left child node labeled
$$S'-(2)_i+(1)_{i+1}|R',$$
and a right child node labeled
$$S'-(2)_i| R'+ (1)_{i+1},$$
where $i$ is the smallest index with $s_i\geq 2$.
\item A node labeled $E|R$ with $E\in\SeqE$, $R\in\SeqR$ is a leaf node.
\end{enumerate}

Once we construct the tree, we can reduce $\btau(S)$ to
$$\btau(S) = \sum c\cdot \tau^k\rho^l\btau(E)\bxi(R),$$
where $c$ is the number of occurrences of the leaf nodes labeled $E|R$, and the powers of $\tau$ and $\rho$ can be determined by the degrees.
\end{construction}

For example, the reduction of $\btau_0^2\btau_1$ labeled $(21)|(0)$ and $\btau_0^4$ labeled $(4)|(0)$ from Example \ref{ex:tau-red} is encoded by the following trees.
$$
\begin{forest}
  for tree={
    draw,
    minimum size=1em, 
    s sep=1em, 
    l sep=0.7em, 
  }
  [$(21)|(0)$
    [$(02)|(0)$
      [$(001)|(0)$]
      [$(0)|(001)$]
    ]
    [$(01)|(01)$
    ]
  ]
\end{forest}
\begin{forest}
  for tree={
    draw,
    minimum size=1em, 
    s sep=1em, 
    l sep=0.7em, 
  }
  [$(4)|(0)$
    [$(21)|(0)$
      [$(02)|(0)$
        [$(001)|(0)$]
        [$(0)|(001)$]
        ]
      [$(01)|(01)$]
    ]
    [$(2)|(01)$
        [$(01)|(01)$]
        [$(0)|(02)$]
    ]
  ]
\end{forest}
$$

\begin{notation}
For $S\in Seq$, $R\in \SeqR$ we introduce the following notations.
\begin{enumerate}
\item
$ \wsum S=\sum s_i2^i.$
\item
$ \nsum S=\sum s_i.$
\item $c(S,R)= 
\prod_{n\geq 1} \binom{\lfloor\sum_{i=0}^{n-1} 2^{i-n}(s_i - r_i)\rfloor}{r_n}.$
Here we take  $\binom{m}{n}=0$ if $m<n$.
\end{enumerate}
\end{notation}

\begin{theorem}
\label{thm:simp}
\begin{equation}\label{eq:simp}
\btau(S)=\sum_{\substack{E\in \SeqE, R\in \SeqR\\ \wsum E + \wsum R=\wsum S}} c(S, R)\tau^{\nsum R}\rho^{\nsum S -\nsum E-2\nsum R}\btau(E){\bxi(R)}
\end{equation}

\end{theorem}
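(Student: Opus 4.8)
The plan is to verify that the binary tree from the Strategy computes $\tau(S)$ correctly, and then to count its leaves. First I would check that the tree is finite and that the root monomial equals the sum over the leaves. Finiteness follows because each branching strictly decreases $\nsum$ of the left-hand sequence (a left branch changes it by $-2+1=-1$, a right branch by $-2$), so every root-to-leaf path has length at most $\nsum S$. That a parent monomial is the sum of its two children monomials is exactly the relation $\tau_n^2=\tau\xi_{n+1}+\rho\tau_{n+1}$ applied to the smallest repeated factor, together with commutativity of $A_{**}$; by induction on the tree, $\tau(S)$ equals the sum of the monomials at the leaves. Along the way I record the conserved quantity $\wsum S' + \wsum R = \wsum S$ (preserved by both branches), which yields the constraint $\wsum E+\wsum R=\wsum S$ in the summation and shows that $E$ is the binary expansion of $\wsum S-\wsum R$, hence determined by $R$.

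Next I would pin down the monomial $p(\tau,\rho)$ attached to each leaf. Writing $T$ and $L$ for the number of right and left branches on a root-to-leaf path, each right branch contributes one factor of $\tau$ and one new $\xi$, while each left branch contributes one factor of $\rho$; thus the leaf carries $\tau^{T}\rho^{L}$ with $T=\nsum R$. Tracking $\nsum$ of the left-hand sequence along the path gives $\nsum E=\nsum S-L-2T$, so $L=\nsum S-\nsum E-2\nsum R$. Both exponents depend only on the label $E|R$ and not on the path, so the coefficient of $\tau(E)\xi(R)$ in $\tau(S)$ is precisely $\tau^{\nsum R}\rho^{\nsum S-\nsum E-2\nsum R}$ times the number of leaves carrying the label $E|R$, matching the exponents in the claimed formula \eqref{eq:simp}. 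It remains to identify that leaf count with $c(S,R)$.

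For the count I would exploit that the tree processes indices in increasing order: all reductions of $\tau_n$ occur consecutively before any $\tau_{n+1}$ is touched, since reductions at index $n$ only raise $s_{n+1}$ or $r_{n+1}$ and never disturb indices below $n$. Let $a_n$ be the number of $\tau_n$ present when stage $n$ begins; then stage $n$ consists of $\lfloor a_n/2\rfloor$ consecutive binary choices, of which exactly $r_{n+1}$ must be right branches to reach the target $R$, contributing $\binom{\lfloor a_n/2\rfloor}{r_{n+1}}$ orderings that all end at the same sublabel. The carries give the recursion $a_{n+1}=s_{n+1}+\lfloor a_n/2\rfloor-r_{n+1}$ with $a_0=s_0$, and since no branch ever produces $\xi_0$ every leaf has $r_0=0$, matching the vanishing of $c(S,R)$ when $r_0>0$. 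Multiplying the independent stage counts yields $\prod_{n\geq 0}\binom{\lfloor a_n/2\rfloor}{r_{n+1}}$ leaves with label $E|R$, where the standard convention $\binom{m}{n}=0$ for $m<n$ automatically kills the unreachable targets $R$.

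Finally I would solve the recursion. Setting $b_n=\lfloor a_n/2\rfloor$, it becomes $b_n=\lfloor (s_n-r_n+b_{n-1})/2\rfloor$, and repeated use of the identity $\lfloor\lfloor x\rfloor/2\rfloor=\lfloor x/2\rfloor$ telescopes this to $b_n=\lfloor\sum_{i=0}^{n}2^{i-n-1}(s_i-r_i)\rfloor$. Reindexing then gives $\prod_{n\geq 0}\binom{b_n}{r_{n+1}}=\prod_{m\geq 1}\binom{b_{m-1}}{r_m}=c(S,R)$, completing the identification and hence the theorem. The main obstacle is the combinatorial bookkeeping of the last two steps: one must argue carefully that the stage reductions factor as an independent product, so that the orderings multiply into a product of binomials, and that the nested floor carries telescope into the single floor appearing in $c(S,R)$. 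By comparison, the degree count and the verification that the tree computes $\tau(S)$ are routine.
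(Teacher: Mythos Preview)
Your proposal is correct and follows essentially the same approach as the paper: both use the binary-tree Strategy, record the conserved quantity $\wsum{S'}+\wsum{R}=\wsum S$, read off the $\tau,\rho$ exponents from the branch types (the paper just says ``by comparing degrees''), and count leaves stage-by-stage as a product of binomials. Your forward recursion $a_{n+1}=s_{n+1}+\lfloor a_n/2\rfloor-r_{n+1}$ and its telescoped solution are the explicit form of the paper's reverse-inductive description of the intermediate nodes $E_{n-1}\mid R_{n-1}$, so the two arguments differ only in the direction of the induction and in the level of detail.
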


\begin{proof}
Note that 
$$\wsum E + \wsum R =\wsum S$$
because of the rewriting rule: every two in $s_i$ contributes to one in $e_{i+1}$ or one in $r_{i+1}$. The sequence $E$ is actually determined uniquely by the equation. The power of $\tau$ and $\rho$ can be verified by comparing degrees and it remains to show that the number of occurrences of leaf nodes labeled $E|R$ in the rewriting tree of $\btau(S)$ is $c(S,R)$.

We use a bottom-up approach to count all leaf nodes labeled $E|R$ with given $E=(e_0,e_1,\dots)\in \SeqE$ and $R=(0,r_1,r_2,\dots)\in \SeqR$. Let $n$ be the largest index such that $r_n\neq 0$. Given any $m<n$, since we always rewrite $\btau_i^2$ from the left-most re-writable entry, before any $\btau_m^2$ is rewritten from $\btau(S)$ its descendants labeled $S'|R'$ always satisfy
$$\sum_{k=0}^{m-1}2^k {s'_k}+\sum_{k=1}^m 2^k{r'_k}=\sum_{k=0}^{m-1}2^k {s_k}.$$
Hence a node labeled $S_n|R_n=E|R$ is a descendant of a node labeled
$$S_{n-1}|R_{n-1}= (e_0,\dots, e_{n-2},\sum_{i=0}^{n-2} 2^{i-n+1}(s_i - e_i - r_i)+s_{n-1}-r_{n-1},s_n,\dots)|(0,r_1,\dots, r_{n-1}, 0,\dots)$$
and we have
$$\sum_{i=0}^{n-2} 2^{i-n+1}(s_i - e_i - r_i)+s_{n-1}-r_{n-1} = \left\lfloor\sum_{i=0}^{n-1} 2^{i-n+1}(s_i - r_i)\right\rfloor$$
because $\sum_{i=0}^{n-2} 2^{i-n+1}e_i<1$.

On the other hand, a node labeled $S_{n-1}|R_{n-1}$ has exactly 
$$\binom{\lfloor\lfloor\sum_{i=0}^{n-1} 2^{i-n+1}(s_i - r_i)\rfloor / 2\rfloor}{r_{n}} = \binom{\lfloor \sum_{i=0}^{n-1} 2^{i-n}(s_i - r_i)\rfloor}{r_{n}}$$ 
descendants labeled $E_n|R_n$ because

$$\btau_{n-1}^{\left\lfloor\sum_{i=0}^{n-1} 2^{i-n+1}(s_i - r_i)\right\rfloor}=\btau_{n-1}^{e_{n-1}}(\tau\bxi_n+\rho\btau_n)^{\lfloor \sum_{i=0}^{n-1} 2^{i-n}(s_i - r_i)\rfloor}.$$

Next, we count the occurrences of $S_{n-1}|R_{n-1}$ in descendants of a node labeled 
$$S_{n-2}|R_{n-2}= (e_0,\dots, e_{n-3},\sum_{i=0}^{n-3} 2^{i-n+2}(s_i - e_i - r_i)+s_{n-2}-r_{n-2},s_{n-1},\dots)|(0,r_1,\dots, r_{n-2}, 0,\dots)$$
and repeat the same calculation, until we get to the root node
$$S_0|R_0 = S|(0).$$
We conclude that the number of occurrences of leaf nodes labeled $E|R$ is 
$$c(S,R)=
\prod_{m=1}^n \binom{\lfloor\sum_{i=0}^{m-1} 2^{i-m}(s_i - r_i)\rfloor}{r_m}$$
\end{proof}

\begin{corollary}
    If $c(S,R)\neq 0$, then $2\nsum R\le \nsum S - \nsum E$.
\end{corollary}

\section{Coproduct formula for $\btau(E)\bxi(T)$}
\begin{notation}
Let $\mathscr X$ be the set of matrices of the following form.
$$\begin{pmatrix}
x_{0,0} & x_{0,1} & x_{0,2} & \cdots\\
x_{1,0} & x_{1,1} & x_{1,2} & \cdots\\
x_{2,0} & x_{2,1} & x_{2,2} & \cdots\\
\vdots & \vdots& \vdots&
\end{pmatrix}$$

\begin{enumerate}
\item We define $\mathscr X_0\subset \mathscr X$ as the subset of $X\in \mathscr X$ such that $x_{0,0}=0$.
\item For $X\in \mathscr X$, define $T,S,R: \mathscr X\to \Seq$ by
$$T(X)_k=\sum_{i} x_{i,k-i},~~
S(X)_k=\sum_j x_{k,j},~~
R(X)_k=\sum_i 2^ix_{i,k}.$$
We define $T_0,S_0,R_0:\mathscr X\to \SeqR$ by setting $F_0(X)=(0,F(X)_1,F(X)_2,\dots)$ for $F=T,S,R$.
\item For $X\in \mathscr X$, define
$$b(X) = \prod_{i\ge 1} \frac{(x_{i,0}+x_{i-1,1}+\cdots+x_{0,i})!}{x_{i,0}!x_{i-1,1}!\cdots x_{0,i}!} ( \text{mod } 2).$$
\item Assume $R,R'\in \Seq$. We write $R\leq R'$ if $r_i\leq r_i'$ for all $i$.
\end{enumerate}
\end{notation}

\vspace{2em}

\begin{lemma} 
\label{lem:coprod}
For any $E\in\SeqE, T\in \Seq$, we have
\begin{enumerate}
\item
\begin{align*}
    \psi(\btau(E)) =& \sum_{\substack{Y\in \mathscr X\\T(Y)\le E}} b(Y)\cdot \btau(S(Y))\otimes \btau(E-T(Y)) \bxi(R_0(Y)).
\end{align*}
\item
\begin{align*}
\psi(\bxi(T)) = \sum_{\substack{X\in \mathscr X_0\\T_0(X)=T}} b(X) \cdot\bxi(S_0(X))\otimes \bxi(R_0(X)).
\end{align*}
\end{enumerate}
\end{lemma}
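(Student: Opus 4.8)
The plan is to prove both identities by exploiting that the coproduct $\psi$ is a homomorphism of $\mathbb{M}_2$-algebras, so that $\psi(\xi(T)) = \prod_k \psi(\xi_k)^{t_k}$ and $\psi(\tau(E)) = \prod_k \psi(\tau_k)^{e_k}$, and then expanding each product via the multinomial theorem in the commutative ring $A_{**}\otimes_{\mathbb{M}_2} A_{**}$. The entire content is then a matter of organizing this expansion through the matrix bookkeeping encoded by $T(\cdot)$, $S(\cdot)$, $R(\cdot)$, and $b(\cdot)$, together with the final reduction mod $2$.

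I would start with part (2), which is cleaner because the $\xi_j$ are genuine polynomial generators and no relation intervenes. Writing $\psi(\xi_k) = \sum_{i+j=k}\xi_i\otimes\xi_j^{2^i}$, each of the $t_k$ factors of $\psi(\xi_k)$ contributes a choice of a pair $(i,j)$ with $i+j=k$. Encoding these choices in a matrix $X$, where $x_{i,j}$ counts how many of the factors $\psi(\xi_{i+j})$ select the summand $\xi_i\otimes\xi_j^{2^i}$, the requirement that there are exactly $t_k$ factors of $\psi(\xi_k)$ becomes $\sum_{i+j=k}x_{i,j} = T(X)_k = t_k$, i.e.\ $T(X)=T$. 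For a fixed $X$, the number of ways to realize it---distributing the $t_k$ indistinguishable factors among the summands on the anti-diagonal $i+j=k$---is the multinomial coefficient of the entries $x_{k,0},x_{k-1,1},\dots,x_{0,k}$, and the product of these over all $k$ reduces mod $2$ to $b(X)$. Reading off the two tensor factors, the left factor accumulates $\prod_{i,j}\xi_i^{x_{i,j}} = \xi(S(X))$ from the row sums and the right factor accumulates $\prod_{i,j}\xi_j^{2^ix_{i,j}} = \xi(R(X))$ from the weighted column sums, which gives (2).

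For part (1) I would run the same argument, the only new feature being the distinguished summand $1\otimes\tau_k$ in $\psi(\tau_k) = 1\otimes\tau_k + \sum_{i+j=k}\tau_i\otimes\xi_j^{2^i}$. For each $k$ I split the $e_k$ factors of $\psi(\tau_k)$ into those selecting this distinguished summand and those selecting an ordinary summand $\tau_i\otimes\xi_j^{2^i}$; recording only the latter in a matrix $Y$ forces $T(Y)_k\le e_k$, i.e.\ $T(Y)\le E$, with exactly $(E-T(Y))_k$ factors choosing $1\otimes\tau_k$. The counting now factors as a choice of which copies take the distinguished term, contributing $\prod_k\binom{e_k}{(E-T(Y))_k} = \binom{E}{E-T(Y)}$, times the multinomial distribution of the remaining factors among the ordinary summands on each anti-diagonal, contributing $b(Y)$. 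The distinguished terms deposit $\tau_k^{(E-T(Y))_k}$ into the right factor, producing $\tau(E-T(Y))$ alongside $\xi(R(Y))$, while the left factor is again $\tau(S(Y))$, which yields (1).

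The routine parts are the exponent bookkeeping and the characteristic-$2$ reduction of the integer coefficients, which I would relegate to direct inspection. The step needing the most care---and the main obstacle---is the derivation of the combinatorial coefficients: verifying that the multinomial count along each anti-diagonal is exactly the factor appearing in $b$, and in part (1) that the independent choice of distinguished versus ordinary summands separates cleanly into the binomial factor $\binom{E}{E-T(Y)}$ times $b(Y)$. One should also confirm that these monomials $\tau(S(Y))\otimes\tau(E-T(Y))\xi(R(Y))$ are precisely what the multinomial expansion produces, so that the identity is an honest expansion of the products $\prod_k\psi(\tau_k)^{e_k}$ and $\prod_k\psi(\xi_k)^{t_k}$; the further rewriting of any repeated $\tau$-powers into Milnor basis form is then handled separately by \cref{thm:simp}.
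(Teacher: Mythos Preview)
Your proposal is correct and follows essentially the same approach as the paper: both prove (2) by the classical Milnor multinomial-expansion argument with the matrix bookkeeping $T(X),S(X),R(X),b(X)$, and both prove (1) by first splitting off the distinguished summand $1\otimes\tau_k$ via a binomial expansion (producing the factor $\binom{E}{E-T(Y)}$) and then applying the same multinomial argument to the remaining summands. Your closing remark that the simplification of $\tau(S(Y))$ is deferred to \cref{thm:simp} also matches the paper's organization.
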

\begin{proof}
The proof of (2) is the same as the classical case in \cite{Mil}. 

The equation in (1) can be computed by the following.
\begin{align*}
\psi(\btau(E)) =& \prod_k (1\otimes \btau_k+\sum_{i=0}^k \btau_i\otimes \bxi_{k-i}^{2^i})^{e_k}\\
=& \sum_{E'+E'' = E} \prod_k (1\otimes \btau_k)^{e_k'} \cdot (\sum_{i=0}^k \btau_i\otimes \bxi_{k-i}^{2^i})^{e^{\prime\prime}_k}\\
=& \sum_{\substack{E'+E'' = E\\ Y\in \mathscr X,T(Y)=E''}} b(Y) \cdot \btau(S(Y))\otimes \btau(E') \bxi(R_0(Y))\\
=&\sum_{\substack{ Y\in \mathscr X,T(Y) \leq E}} b(Y) \cdot \btau(S(Y))\otimes \btau(E-T(Y)) \bxi(R_0(Y)).
\end{align*}

\end{proof}

\begin{corollary}\label{cor:coprod_simp}
For any $E\in \SeqE, T\in \SeqR,$ we have
\begin{align*}
    & \psi(\btau(E)\bxi(T))
    \\
    =& \sum_{\substack{Y\in \mathscr X,T(Y)\le E\\X\in \mathscr X_0, T_0(X)=T}}
    b(Y)b(X)\cdot \btau(S(Y))\bxi(S_0(X))\otimes \btau(E-T(Y)) \bxi(R_0(Y+X))\\
    =& \sum_{\substack{Y\in \mathscr X,T(Y)\le E\\X\in \mathscr X_0, T_0(X)=T}}~
    \sum_{\substack{E'\in \SeqE, R'\in \SeqR\\ \wsum{E'} + \wsum{R'} =\wsum{S(Y)}}}
    \begin{array}{|c|}\hline
    b(Y)b(X)c(S(Y), R')\\
    \cdot\\
    \tau^{\nsum{R'}}\rho^{\nsum{S(Y)} -\nsum{E'}-2\nsum{R'}}\\
    \cdot\\
    \btau(E')\bxi(R'+S_0(X))\otimes \btau(E-T(Y)) \bxi(R_0(Y+X))\\\hline
    \end{array}.
\end{align*}
(We write the long product in a multi-line box to improve readability.)
\end{corollary}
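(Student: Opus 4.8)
The plan is to exploit that $\psi$ is an algebra homomorphism on $A_{**}$, so that $\psi(\tau(E)\xi(T)) = \psi(\tau(E))\cdot\psi(\xi(T))$, and then simply substitute the two expansions from \cref{lem:coprod}. First I would multiply the right-hand sides of \cref{lem:coprod}(1) and \cref{lem:coprod}(2) inside $A_{**}\otimes A_{**}$ using the componentwise rule $(a\otimes b)(c\otimes d)=ac\otimes bd$. The left tensor factor then becomes $\tau(S(Y))\xi(S(X))$, and the right factor becomes $\tau(E-T(Y))\,\xi(R(Y))\,\xi(R(X))$. Since the $\xi_i$ are polynomial generators they commute and their exponents add, so $\xi(R(Y))\xi(R(X)) = \xi(R(Y)+R(X))$; and because $R(X)_r=\sum_i 2^i x_{i,r}$ is additive in the matrix entries, $R(Y)+R(X)=R(Y+X)$. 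Collecting the coefficient $\binom{E}{E-T(Y)}b(Y)b(X)$ over the index set $\{T(Y)\le E,\ T(X)=T\}$ yields the first displayed equality.

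For the second equality, the only remaining issue is that $S(Y)\in\Seq$ need not lie in $\Seqe$, so $\tau(S(Y))$ is in general not yet a Milnor basis monomial. Here I would apply \cref{thm:simp} with $S=S(Y)$ to expand
$$\tau(S(Y)) = \sum_{\substack{E_1\in\Seqe,\,S'\in\Seq\\ \wsum{E_1}+\wsum{S'}=\wsum{S(Y)}}} c(S(Y),S')\,\tau^{\nsum{S'}}\rho^{\nsum{S(Y)}-\nsum{E_1}-2\nsum{S'}}\,\tau(E_1)\,\xi(S').$$
Substituting this into the left tensor factor and again using $\xi(S')\xi(S(X))=\xi(S'+S(X))$ turns the left factor into $\tau(E_1)\xi(S'+S(X))$, with the scalars $\tau^{\nsum{S'}}$ and $\rho^{\nsum{S(Y)}-\nsum{E_1}-2\nsum{S'}}$ pulled out into the coefficient. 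Merging the two summations and multiplying all coefficients together reproduces exactly the boxed expression.

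Every step is mechanical once the multiplicativity of $\psi$ is invoked, and the key conceptual point to keep straight is that the rewriting via \cref{thm:simp} touches only $\tau(S(Y))$, which lives entirely in the left-hand tensor factor; hence the polynomial factor in $\tau,\rho$ and the new $\xi(S')$ all accumulate on the left side and never interact with the right factor $\tau(E-T(Y))\xi(R(Y+X))$. I expect the main obstacle to be purely bookkeeping: keeping the three index sets $\{T(Y)\le E\}$, $\{T(X)=T\}$, and $\{\wsum{E_1}+\wsum{S'}=\wsum{S(Y)}\}$ parametrized independently so that no term is double-counted, together with a brief check that the $\mathbb M_2$-scalars, which here are coefficients of the left factor only, pass through the bilinear tensor multiplication without needing the asymmetric unit maps $\eta_L,\eta_R$ of the Hopf algebroid.
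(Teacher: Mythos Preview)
Your proposal is correct and follows exactly the paper's approach: the paper's proof is the single sentence ``It follows from \cref{lem:coprod} and \cref{thm:simp},'' and you have simply spelled out the substitution and bookkeeping that this sentence encodes. Your extra care about scalars in the Hopf algebroid tensor product is unnecessary here since the $\tau,\rho$ factors arise on the left tensor factor via $\eta_L$ and are never moved across, but it does no harm.
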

\begin{proof}
It follows from \cref{lem:coprod} and \cref{thm:simp}.
\end{proof}

\section{Product formula for the conjugated motivic Milnor basis}
\begin{theorem}\label{thm:MM_prod}
For any $E_1,E_2\in \SeqE$ and $R_1,R_2\in \SeqR$, we have
\begin{align*}
    & \bar M(E_1,R_1) \cdot \bar M(E_2,R_2) \\
    =& \sum_{\substack{Y\in \mathscr X,X\in \mathscr X_0\\ R_0(Y+X)=R_2\\S_0(X)\le R_1\\\wsum{E_1} + \wsum{R_1-S_0(X)} =\wsum{S(Y)}\\
    E_2+T(Y)\in \SeqE}}
    \begin{array}{|c|}\hline
    b(Y)b(X)c(S(Y), R_1-S_0(X))\\
    \cdot\\
    \tau^{\nsum{R_1-S_0(X)}}\rho^{\nsum{S(Y)} -\nsum{E_1}-2\nsum{R_1-S_0(X)}}\\
    \cdot\\
    \bar M(E_2+T(Y),T_0(X))\\\hline
    \end{array}.
\end{align*}
\end{theorem}
\begin{proof}
    This is the dualization of \cref{cor:coprod_simp} with substitutions $E'=E_1$, $R'+S_0(X)=R_1$, $E-T(Y)=E_2$ and $R_0(Y+X)=R_2$.
\end{proof}

Lastly, we deal with the case when the second factor has a coefficient $\tau^n$.
\begin{theorem}
For $E\in \SeqE, R\in\SeqR,$ we have
\label{thm:QPtau_prod}
$$\bar M(E,R)\cdot \tau^n 
   = \sum_{\substack{0\le m\le n\\E'\in \SeqE, R'\in \SeqR\\ \wsum{E} + \wsum{R'}=\wsum{E'}+m}} 
    \begin{array}{|c|}\hline
    \binom{n}{m}c(E'+(m)_0,R')\\
    \cdot\\
    \tau^{\nsum{R'}+n-m}\rho^{\nsum{E'} -\nsum E-2\nsum {R'}+2m}\\
    \cdot\\
    \bar M(E',R-R')\\\hline
    \end{array}.$$
\end{theorem}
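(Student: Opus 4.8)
The plan is to reduce the whole computation to the single–variable rewriting already handled by \cref{thm:simp}, exploiting the Hopf algebroid asymmetry encoded in the right unit $\eta_R$. The starting point is a duality rule for right multiplication by a scalar: for $\alpha\in\mathbb M_2$ and any monomial $w\in A_{**}$,
$$\langle Q(E)P(R)\cdot\alpha,\,w\rangle=\langle Q(E)P(R),\,\eta_R(\alpha)\cdot w\rangle,$$
where the product on the right is taken in $A_{**}$. This is the dual of the statement that composing an operation with the scalar operation ``multiply by $\alpha$'' is governed by the Cartan formula together with the action of operations on the point, which is recorded by $\eta_R$; equivalently, right multiplication by $\alpha$ on operations is adjoint to left multiplication by $\eta_R(\alpha)$ on $A_{**}$. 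First I would record this rule and sanity-check it on $Q_0\cdot\tau$, where $\eta_R(\tau)=\tau+\rho\tau_0$ reproduces $Q_0\cdot\tau=\tau Q_0+\rho$.

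Next I would specialize to $\alpha=\tau^n$. Since $\eta_R$ is a ring homomorphism with $\eta_R(\tau)=\tau+\rho\tau_0$ and $\eta_R(\rho)=\rho$, I get $\eta_R(\tau^n)=(\tau+\rho\tau_0)^n=\sum_m\binom{n}{m}\tau^{n-m}\rho^m\tau_0^m$ in the commutative ring $A_{**}$; this is exactly where the factor $\binom{n}{m}$ (read mod $2$) enters. To read off the coefficient of a fixed basis element $Q(E')P(R-R')$, I would evaluate the duality rule at $w=\tau(E')\xi(R-R')$, reducing the task to computing $\langle Q(E)P(R),\,\tau_0^m\,\tau(E')\xi(R-R')\rangle$ for each $m$.

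The key simplification is that, since $E'\in\Seqe$, multiplying by $\tau_0^m$ only raises the zeroth exponent, so $\tau_0^m\tau(E')=\tau(E'+(m)_0)$. Now $\tau(E'+(m)_0)$ lies outside $\Seqe$, and \cref{thm:simp} rewrites it as
$$\tau(E'+(m)_0)=\sum_{\tilde E,\tilde R} c(E'+(m)_0,\tilde R)\,\tau^{\nsum{\tilde R}}\rho^{\nsum{E'}+m-\nsum{\tilde E}-2\nsum{\tilde R}}\,\tau(\tilde E)\xi(\tilde R),$$
summed over $\wsum{\tilde E}+\wsum{\tilde R}=\wsum{E'}+m$. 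Carrying the spectator factor $\xi(R-R')$ along and pairing against $Q(E)P(R)$ forces $\tilde E=E$ and $\tilde R=R'$ by duality; this collapses the sum, turns the support condition into $\wsum{E}+\wsum{R'}=\wsum{E'}+m$, and produces the coefficient $c(E'+(m)_0,R')$. Collecting the $\tau$- and $\rho$-powers from the binomial expansion and from \cref{thm:simp} then yields $\tau^{\nsum{R'}+n-m}$ and $\rho^{\nsum{E'}-\nsum{E}-2\nsum{R'}+2m}$, matching the claimed formula after summing over $m$, $E'$ and $R'$.

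The step I expect to be the main obstacle is pinning down the duality rule with the correct unit: it is essential that the scalar is absorbed through $\eta_R$ rather than $\eta_L$, since this is precisely the place where the Hopf algebroid fails to be a Hopf algebra, and it is the source of the $\rho\tau_0$ correction that generates all the nontrivial terms. Once that identity is fixed, the remainder is degree bookkeeping and a direct appeal to \cref{thm:simp}, the only care being that the binomial coefficients and the combinatorial factor $c$ are all interpreted mod $2$.
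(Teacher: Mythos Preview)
Your proposal is correct and follows essentially the same route as the paper: both use the adjunction $\langle a\cdot\tau^n, w\rangle=\langle a,\eta_R(\tau^n)w\rangle=\langle a,(\tau+\rho\tau_0)^n w\rangle$, expand $(\tau+\rho\tau_0)^n$ binomially, and then invoke \cref{thm:simp} on $\tau(E'+(m)_0)$ before dualizing. The paper is slightly more compact in that it computes $(\tau+\rho\tau_0)^n\tau(S)\xi(T)$ directly and then substitutes $S=E'$, $T=R-R'$, but the content is identical.
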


\begin{proof}
Note that
for $b\in A_{**}$ and $a\in A^{*,*}$, we have
$$\langle b, a\tau^n\rangle=\langle b, a\langle \tau^n, 1\rangle\rangle=\langle b\otimes \tau^n, a\otimes 1\rangle=\langle(\tau+\rho\btau_0)^nb, a\rangle.$$
For $E'\in\SeqE$ and $T\in\SeqR$, we have
\begin{align*}
    & (\tau+\rho\btau_0)^n\btau(E')\bxi(T)\\
    =& \sum_m\binom{n}{m} \tau^{n-m}\rho^m\tau\left(E'+(m)_0\right)\bxi(T)\\
    =& \sum_m\sum_{\substack{E\in \SeqE, R'\in \Seq\\ \wsum{E} + \wsum{R'}=\wsum{E'}+m}} 
    \begin{array}{|c|}\hline
    \binom{n}{m}c(E'+(m)_0,R')\\
    \cdot\\
    \tau^{\nsum{R'}+n-m}\rho^{\nsum{E'} -\nsum {E}-2\nsum {R'}+2m}\\
    \cdot\\
    \btau(E)\bxi(R'+T) \\\hline
    \end{array}.
\end{align*}
The conclusion is dual to the above with substitution $R=R'+T$.
\end{proof}

\begin{corollary}\label{cor:last}
    For $E_1, E_2\in \SeqE, R_1,R_2\in\Seq,$ we have
\begin{align*}
    & \bar M(E_1,R_1)\cdot \tau^n \bar M(E_2,R_2)\\
    =& \sum_{\substack{0\le m\le n\\E'\in \SeqE, R'\in \SeqR\\ \wsum{E_1} + \wsum{R'}=\wsum{E'}+m}} 
    \begin{array}{|c|}\hline
    \binom{n}{m}c(E'+(m)_0,R')\\
    \cdot\\
    \tau^{\nsum{R'}+n-m}\rho^{\nsum{E'} -\nsum {E_1}-2\nsum {R'}+2m}\\
    \cdot\\
    \bar M(E',R_1-R')\cdot \bar M(E_2,R_2)\\\hline
    \end{array}\\
    =& \sum_{\substack{0\le m\le n\\E'\in \SeqE, R'\in \SeqR\\ \wsum{E_1} + \wsum{R'}=\wsum{E'}+m\\
    Y\in \mathscr X,X\in \mathscr X_0\\R_0(Y+X)=R_2\\S_0(X)\le R_1-R'\\\wsum{E'} + \wsum{R_1-R'-S_0(X)} =\wsum{S(Y)}\\E_2+T(Y)\in \SeqE}} 
    \begin{array}{|c|}\hline
    \binom{n}{m}c(E'+(m)_0,R')\cdot b(Y)b(X)c(S(Y), R_1-R'-S_0(X))\\
    \cdot\\
    \tau^{\nsum{R_1-S_0(X)}+n-m}\rho^{\nsum{S(Y)} -\nsum{E_1}+2\nsum{S_0(X)}-2\nsum{R_1}+2m}\\
    \cdot\\
    \bar M(E_2+T(Y),T_0(X))\\\hline
    \end{array}.\\
\end{align*}
\end{corollary}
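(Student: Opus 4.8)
The plan is to obtain this corollary by composing the two product formulas already established, \cref{thm:QPtau_prod} and \cref{thm:QP_prod}, and then collecting terms. No new input about the Hopf algebroid structure is required: once those two theorems are in hand, the entire content is the bookkeeping of the summation indices and of the exponents of $\tau$ and $\rho$. I will use throughout that $\mathbb{M}_2$ is commutative and that the product on the Steenrod algebra is left $\mathbb{M}_2$-linear in its first variable, so that any scalar coefficient sitting on the far left stays put and never interacts with $\eta_R$.

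First I would establish the first displayed equality. By associativity I write the product as $\bigl(Q(E_1)P(R_1)\cdot\tau^n\bigr)\cdot Q(E_2)P(R_2)$, where $\tau^n$ is right multiplication by a scalar. Applying \cref{thm:QPtau_prod} with $E=E_1$ and $R=R_1$ rewrites $Q(E_1)P(R_1)\cdot\tau^n$ as the sum over $m$ and over $E'\in\Seqe$, $R'\in\Seq$ with $\wsum{E_1}+\wsum{R'}=\wsum{E'}+m$ of the stated scalar coefficient times $Q(E')P(R_1-R')$. Because each such coefficient is a left $\mathbb{M}_2$-scalar, it factors out of the subsequent right multiplication, and the untouched factor $Q(E_2)P(R_2)$ is carried along on the right; this is exactly the first equality.

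Next I would substitute \cref{thm:QP_prod} into each summand, applying it to $Q(E')P(R_1-R')\cdot Q(E_2)P(R_2)$ with the substitutions $E_1\mapsto E'$ and $R_1\mapsto R_1-R'$ (leaving $E_2,R_2$ fixed). This introduces the matrices $Y,X$ subject to $R(Y+X)=R_2$, $S(X)\le R_1-R'$, $\wsum{E'}+\wsum{R_1-R'-S(X)}=\wsum{S(Y)}$, and $E_2+T(Y)\in\Seqe$, together with the further coefficient $\binom{E_2+T(Y)}{E_2}b(Y)b(X)c(S(Y),R_1-R'-S(X))$ and the output basis element $Q(E_2+T(Y))P(T(X))$. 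Merging this index set with the one from the first step produces precisely the summation range displayed in the second equality, and the two scalar coefficients (both left $\mathbb{M}_2$-scalars) multiply commutatively into the product shown.

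The one genuine computation, and the step I expect to require the most care, is checking that the two pairs of monomial exponents collapse to the single pair in the statement. Using additivity of $\nsum$, the $\tau$-exponents add as $(\nsum{R'}+n-m)+\nsum{R_1-R'-S(X)}$; since $\nsum{R_1-R'-S(X)}=\nsum{R_1}-\nsum{R'}-\nsum{S(X)}$, the $\nsum{R'}$ terms cancel and this equals $\nsum{R_1-S(X)}+n-m$. For the $\rho$-exponents the sum is $(\nsum{E'}-\nsum{E_1}-2\nsum{R'}+2m)+(\nsum{S(Y)}-\nsum{E'}-2\nsum{R_1-R'-S(X)})$; the $\nsum{E'}$ terms cancel immediately, and after expanding $-2\nsum{R_1-R'-S(X)}=-2\nsum{R_1}+2\nsum{R'}+2\nsum{S(X)}$ the $\nsum{R'}$ terms cancel as well, leaving $\nsum{S(Y)}-\nsum{E_1}+2\nsum{S(X)}-2\nsum{R_1}+2m$. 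These agree with the exponents in the statement, which completes the proof.
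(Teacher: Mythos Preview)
Your proposal is correct and follows exactly the approach implicit in the paper: the corollary is stated there without proof, as it is an immediate consequence of composing \cref{thm:QPtau_prod} with \cref{thm:QP_prod}. Your verification of the $\tau$- and $\rho$-exponent arithmetic is more detailed than anything the paper provides, and is accurate.
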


The following corollary is a conjugated version of \cite[Section 13]{Voev}.
\begin{corollary}
Consider $E\in \SeqE$ and $R\in \SeqR$. We have
\begin{enumerate}
    \item $\bar M(E, R) = \bar P(R)\bar Q(E)$,
    \item $\bar Q(e_0,e_1,\cdots) = \prod_{e_i\neq 0} \bar Q_i, \text{ where } e_i\in \{0,1\},$
    \item $\bar Q_i\bar Q_j=\bar Q_j\bar Q_i$,
    \item $\bar Q_i^2 = 0$.
\end{enumerate}
\end{corollary}
\begin{proof}
(1) By Theorem \ref{thm:MM_prod}, we have
\begin{align*}
    & \bar P(R)\bar Q(E) = \bar M(0,R) \cdot \bar M(E,0) \\
    =& \sum_{\substack{Y\in \mathscr X,X\in \mathscr X_0\\ R_0(Y+X)=0\\S_0(X)\le R\\\wsum{R-S_0(X)} =\wsum{S(Y)}\\
    E+T(Y)\in \SeqE}}
    \begin{array}{|c|}\hline
    b(Y)b(X)c(S(Y), R-S_0(X))\\
    \cdot\\
    \tau^{\nsum{R-S_0(X)}}\rho^{\nsum{S(Y)}-2\nsum{R-S_0(X)}}\\
    \cdot\\
    \bar M(E_2+T(Y),T_0(X))\\\hline
    \end{array}\\
    =& \sum_{\substack{\wsum{R'} =\wsum{E'}\\
    E+E'\in \SeqE}}
    c(E', R')\cdot \tau^{\nsum{R'}}\rho^{\nsum{E'}-2\nsum{R'}}\cdot \bar M(E+E',R-R')\\
    =& \bar M(E,R)
\end{align*}
The third equality holds because $R_0(Y+X)=0$ implies that $Y$ and $X$ are nonzero only in the zeroth column. The fourth equality holds because $\wsum{R'}=\wsum{E'}$ and $E'\in \SeqE$ implies $\nsum{R'}\ge \nsum{E'}$. On the other hand nonzero terms always have $2\nsum{R'}\le \nsum{E'}$ and then we must have $R'=E'=0$. Hence there is only one nontrivial term.

(2) It is sufficient to prove when $E+(1)_j\in \SeqE$ we have $\bar M(E+(1)_j, 0)=\bar M(E, 0)\bar Q_j$.
By Theorem \ref{thm:MM_prod}, we have
\begin{align*}
    & \bar M(E, 0)\bar Q_j = \bar M(E, 0)\cdot \bar M((1)_j, 0) \\
    =& \sum_{\substack{Y\in \mathscr X\\ R_0(Y)=0\\\wsum{E} =\wsum{S(Y)}\\
    (1)_j+T(Y)\in \SeqE}}
    b(Y)\cdot \rho^{\nsum{S(Y)} -\nsum{E}}\cdot \bar M((1)_j+T(Y),0)\\
    =& \sum_{\substack{\wsum{E} =\wsum{S}\\
    (1)_j+S\in \SeqE}}
    \rho^{\nsum{S} -\nsum{E}}\cdot \bar M((1)_j+S,0)\\
    =& \bar M(E+(1)_j, 0)
\end{align*}
The last equality holds because $(1)_j+S\in \SeqE$ implies $S\in \SeqE$. Hence $\wsum E=\wsum S$ forces $S=E$.

(3) This is a direct consequence of (2).

(4) By Theorem \ref{thm:MM_prod}, we have
\begin{align*}
    & \bar Q_i^2 = \bar M((1)_i, 0)\cdot \bar M((1)_i, 0) \\
    =& \sum_{\substack{Y\in \mathscr X\\ R_0(Y)=0\\\wsum{(1)_i} =\wsum{S(Y)}\\
    (1)_i+T(Y)\in \SeqE}}
    b(Y)\cdot \rho^{\nsum{S(Y)} -\nsum{(1)_i}}\cdot \bar M((1)_i+T(Y),0)\\
    =& \sum_{\substack{\wsum{(1)_i} =\wsum{S}\\
    (1)_i+S\in \SeqE}}
    \rho^{\nsum{S} -\nsum{(1)_i}}\cdot \bar M((1)_i+S,0)\\
    =& 0
\end{align*}
The last equality holds because $\wsum{(1)_i} =\wsum{S}$ and $(1)_i+S\in \SeqE$ cannot hold simultaneously.
\end{proof}

\begin{example}\label{ex:Kylling}
We use our formula to compute an example and compare it to \cite{Kyl}. Using the standard basis, Kylling calculated:
\begin{equation*}
    P(0,2,0,\dots) \cdot Q(0,1,0,\dots) = Q(0,1,0,\dots) P(0,2,0,\dots) + Q(0,0,1,0,\dots) + \rho Q(1,1,0,\dots) P(0,1,0,\dots).
\end{equation*}
(Note that $P(0,2,0,\dots)$ is denoted by $P^2$ and $Q(0,1,0,\dots)$ by $Q_1$ in \cite{Kyl}.)

We can recover this result using our formula. As mentioned in Remark \ref{rem:basis_change}, $$P(0,2,0,\dots) = \bar P(0,2,0,\dots)+\tau \bar Q(1,1,0,\dots)$$ and $$Q(0,1,0,\dots) = \bar Q(0,1,0,\dots).$$ Therefore, by \cref{cor:last},
\begin{align*}
P(0,2,0,\dots) \cdot Q(0,1,0,\dots) &= \left(\bar P(0,2,0,\dots)+\tau \bar Q(1,1,0,\dots)\right) \cdot \bar Q(0,1,0,\dots)\\
&= \bar P(0,2,0,\dots) \bar Q(0,1,0,\dots) + \tau \bar Q_0 \bar Q_1^2 \\
&= \bar P(0,2,0,\dots) \bar Q(0,1,0,\dots)\\
&= \bar M((0,1,0,\dots), (0,2,0,\dots)).
\end{align*}

It remains to show that $\bar M((0,1,0,\dots), (0,2,0,\dots))$ agrees with the right-hand side of the above equation by Kylling.
This can be checked by the following conversion between classical basis and conjugated basis of $A_{*,*}$ in relative degrees.
\begin{align*}
    \chi(\tau_1 \xi_1^2) &= (\tau_1 + \tau_0 \xi_1)\xi_1^2 = \tau_1 \xi_1^2 + \tau_0 \xi_1^3, \\
    \chi(\tau_2) &= \tau_2 + \tau_1 \xi_1^2 + \tau_0 \xi_2 + \tau_0 \xi_1^3, \\
    \chi(\tau_0 \tau_1 \xi_1) &= \tau_0 (\tau_1 + \tau_0 \xi_1) \xi_1 = \tau_0 \tau_1 \xi_1 + \tau_0^2 \xi_1^2 \\
    &= \tau_0 \tau_1 \xi_1 + (\tau \xi_1 + \rho \tau_1 + \rho \tau_0 \xi_1)\xi_1^2 \\
    &= \tau_0 \tau_1 \xi_1 + \tau \xi_1^3 + \rho \tau_1 \xi_1^2 + \rho \tau_0 \xi_1^3.
\end{align*}
\end{example}

\printbibliography

@article{Kyl,
  title={Recursive formulas for the motivic Milnor basis},
  author={Kylling, Jonas Irgens},
  journal={New York journal of mathematics},
  volume={23},
  pages={49--58},
  year={2017}
}

@article{Voe03,
  title={Motivic cohomology with $\mathbb Z/2$-coefficients},
  author={Voevodsky, Vladimir},
  journal={Publications Math{\'e}matiques de l'IH{\'E}S},
  volume={98},
  pages={59--104},
  year={2003}
}

@article{Riou,
  title={Op\'erations de Steenrod motiviques},
  author={Riou, Jo{\"e}l},
  journal={arXiv preprint arXiv:1207.3121},
  year={2012}
}

@article{HKO,
  title={The motivic Steenrod algebra in positive characteristic.},
  author={Hoyois, Marc and Kelly, Shane and {\O}stv{\ae}r, Paul Arne},
  journal={Journal of the European Mathematical Society (EMS Publishing)},
  volume={19},
  number={12},
  year={2017}
}

@article{Voe11,
  title={On motivic cohomology with $\mathbb Z/l$-coefficients},
  author={Voevodsky, Vladimir},
  journal={Annals of mathematics},
  pages={401--438},
  year={2011},
  publisher={JSTOR}
}

@article{IWX,
  title={Stable homotopy groups of spheres: from dimension 0 to 90},
  author={Isaksen, Daniel C and Wang, Guozhen and Xu, Zhouli},
  journal={Publications math{\'e}matiques de l'IH{\'E}S},
  volume={137},
  number={1},
  pages={107--243},
  year={2023},
  publisher={Springer}
}

@article{DI,
  title={The motivic Adams spectral sequence},
  author={Dugger, Daniel and Isaksen, Daniel C},
  journal={Geometry \& Topology},
  volume={14},
  number={2},
  pages={967--1014},
  year={2010},
  publisher={Mathematical Sciences Publishers}
}

@article{Mil,
  title={The Steenrod algebra and its dual},
  author={Milnor, John},
  journal={Annals of Mathematics},
  volume={67},
  number={1},
  pages={150--171},
  year={1958},
  publisher={JSTOR}
}

@article{Voev,
  title={Reduced power operations in motivic cohomology},
  author={Voevodsky, Vladimir},
  journal={Publications Math{\'e}matiques de l'IH{\'E}S},
  volume={98},
  pages={1--57},
  year={2003}
}

@misc{LinGrobner,
    AUTHOR = {Weinan Lin.},
     TITLE = {Noncommutative {G}r\"obner Bases and {E}xt Groups. {P}eking {M}athematical {J}ournal, 2023.},
howpublished   = {\url{https://doi.org/10.1007/s42543-023-00080-6} Published Online}
}

@book{ravenel,
Author = {Ravenel, D.},
Title = {
 Complex Cobordism and Stable Homotopy Groups of Spheres},
 journal = {American Mathematical Society (AMS): Providence, RI, United States, 2003}
}

\end{document}